\documentclass[11pt]{amsart}

\pdfoutput=1
\usepackage{amsmath,amsthm,amssymb,amsfonts, graphicx, color, verbatim, tikz}
\usepackage[nobysame]{amsrefs}

%theorems
\newtheorem{thm}{Theorem}[section]

\newtheorem{prop}[thm]{Proposition}
\newtheorem{lem}[thm]{Lemma}

\theoremstyle{definition}

\newtheorem{ex}[thm]{Example}

\theoremstyle{remark}
\newtheorem{rmk}[thm]{Remark}
\newtheorem{fact}[thm]{Fact}

%tikz Diagrams
\tikzset{node distance=2cm, auto}

% letters
\newcommand{\mfm}{\mathfrak{m}}
\newcommand{\mfa}{\mathfrak{a}}
\newcommand{\mfb}{\mathfrak{b}}

\newcommand{\bn}{\mathbb{N}}

% Operators
\newcommand{\Hom}{\operatorname{Hom}}
\newcommand{\Ext}{\operatorname{Ext}}
\newcommand{\ass}{\operatorname{Ass}}

\newcommand{\depth}{\operatorname{depth}}

\newcommand{\ann}{\operatorname{ann}}

% shortcuts
\newcommand{\bit}{\begin{itemize}}
\newcommand{\eit}{\end{itemize}}
\newcommand{\ben}{\begin{enumerate}}
\newcommand{\een}{\end{enumerate}}
\newcommand{\bpf}{\begin{proof}}
\newcommand{\epf}{\end{proof}}

\begin{document}
\title{Systems of parameters and the Cohen-Macaulay property}

\author{Katharine Shultis}
\address{Department of Mathematics, University of Nebraska, Lincoln, NE 68588-0130, USA}
\email{s-kshulti1@math.unl.edu}

\date{\today}

\subjclass[2010]{13H10, 13C05}

\thanks{This research was partially supported by NSF grant DMS-1201889 and U.S. Department of Education grant P00A120068 (GAANN)}

\begin{abstract}
Let $R$ be a commutative, Noetherian, local ring and $M$ an $R$-module. Consider the module of homomorphisms $\Hom_R(R/\mfa,M/\mfb M)$ where $\mfb\subseteq\mfa$ are parameter ideals of $M$. When $M=R$ and $R$ is Cohen-Macaulay, Rees showed that this module of homomorphisms is always isomorphic to $R/\mfa$, and in particular, a free module over $R/\mfa$ of rank one. In this work, we study the structure of such modules of homomorphisms for general $M$.
\end{abstract}

\maketitle

\section{Introduction}

Let $R$ be a commutative, Noetherian, local ring. This work concerns the module of homomorphisms $\Hom_R(R/\mfa,R/\mfb)$ where $\mfa$ and $\mfb$ are parameter ideals of $R$ with $\mfb\subseteq\mfa$. 

An immediate consequence of a result of Rees \cite{Rees} is that when $R$ is Cohen-Macaulay, this module of homomorphisms is isomorphic to $R/\mfa$. In particular, as an $R/\mfa$-module, it is free of rank one. The focus of this work is to study the structure of this module of homomorphisms when $R$ is not Cohen-Macaulay. Our main results identify circumstances under which it is decomposable and not free.

When $R$ has dimension one and depth zero and $\mfa$ and $\mfb$ are in sufficiently high powers of the maximal ideal, we prove that $\Hom_R(R/\mfa,R/\mfb)$ is neither indecomposable nor free as an $R/\mfa$-module.

We can extend the result about decomposability both to modules and to higher dimensions. In particular, for $M$ a nonzero, finitely generated $R$-module we consider the module $\Hom_R(R/\mfa,M/\mfb M)$ where $\mfb\subseteq\mfa$ are parameter ideals of $M$. When $M$ is not Cohen-Macaulay, we can show that the module of homomorphisms decomposes for any parameter ideal $\mfa$ and for $\mfb$ chosen to be generated by suitable powers of any system of parameters generating $\mfa$. This result generalizes recent work of K. Bahmanpour and R. Naghipour \cite{BandN} in the case where $M=R$. Specifically, when $R$ is not Cohen-Macaulay, they showed there exist some parameter ideals $\mfb\subseteq\mfa$ of $R$ for which $\Hom_R(R/\mfa,R/\mfb)$ is not cyclic.

\section{Preliminaries}

Throughout, $R$ will be a commutative, Noetherian, local ring with maximal ideal $\mfm$, and $M$ will be a finitely generated $R$-module of dimension $d$. A {\it system of parameters} of $M$ is a set of $d$ elements generating an ideal $\mfa$ such that $M/\mfa M$ has finite length. An ideal $\mfa$ generated by a system of parameters is called a {\it parameter ideal.} We begin by reviewing the consequence of Rees' result in the Cohen-Macaulay case.

\begin{rmk}\label{applyRees}
If $M$ is a Cohen-Macaulay $R$-module of dimension $d$, and $\mfb\subseteq\mfa$ are parameter ideals of $M$, then $$\Hom_R(R/\mfa,M/\mfb M)\cong M/\mfa M.$$ In particular, when $M=R$, this is a free $R/\mfa$-module of rank one and hence indecomposable. 

Indeed, the elements of a system of parameters of $M$ form an $M$-regular sequence. The isomorphism above is deduced from Rees' Theorem \cite[Lemma 1.2.4]{BandH}: 
\begin{align*}
\Hom_R(R/\mfa,M/\mfb M) &\cong \Ext_R^d(R/\mfa,M)\\
&\cong \Hom_R(R/\mfa,M/\mfa M)\\
&\cong M/\mfa M.
\end{align*}
\end{rmk}

We now recall some well-known results.

\begin{fact}\label{sopofMorR}
Let ${\bf a}=a_1,\ldots,a_d\in\mfm$ and ${\bf\overline{a}}=\overline{a_1},\ldots,\overline{a_d}$ be the images of the $a_i$ in $R/\ann(M)$. Then ${\bf a}$ is a system of parameters of $M$ if and only if ${\bf\overline{a}}$ is a system of parameters of the ring $S:=R/\ann(M)$.
\end{fact}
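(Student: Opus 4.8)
The plan is to reduce both conditions to a single statement about Krull dimension. Write $I=\ann(M)$, so $S=R/I$, and recall the standard fact that $\dim S=\dim R/\ann(M)=\dim M=d$. Consequently, in both halves of the asserted equivalence we are looking at a sequence of exactly $d=\dim S$ elements of the maximal ideal (of $R$, equivalently of $S$, since $\mfm S=\mfm/I$). So the only content to verify is that $M/\mfa M$ has finite length over $R$ if and only if $S/\mfa S$ has finite length over $S$, where $\mfa=(a_1,\ldots,a_d)$ and $\mfa S$ is the ideal generated by the $\overline{a_i}$.

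Next I would compare supports. Since $\supp_R(M)=V(I)$, the standard identity $\supp_R(M/\mfa M)=\supp_R(M)\cap V(\mfa)$ gives
$$\supp_R(M/\mfa M)=V(I)\cap V(\mfa)=V(I+\mfa),$$
which is precisely the image in $\spec R$ of $\spec(S/\mfa S)$. Now, a finitely generated module over a local ring has finite length if and only if its support meets only the maximal ideal, equivalently its dimension is zero. Applying this to $M/\mfa M$ as an $R$-module and to $S/\mfa S$ as an $S$-module, and using the displayed equality of supports, yields
$$\ell_R(M/\mfa M)<\infty \iff \dim(S/\mfa S)=0 \iff \ell_S(S/\mfa S)<\infty.$$

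Combining the two steps: $\mathbf a$ is a system of parameters of $M$ iff it consists of $d$ elements of $\mfm$ with $\ell_R(M/\mfa M)<\infty$, iff $\overline{\mathbf a}$ consists of $d=\dim S$ elements of $\mfm S$ with $\ell_S(S/\mfa S)<\infty$, iff $\overline{\mathbf a}$ is a system of parameters of $S$. I do not expect a real obstacle here; the points needing care are the standard facts $\dim R/\ann(M)=\dim M$ and $\supp(M/\mfa M)=\supp(M)\cap V(\mfa)$ (both available in Bruns--Herzog). One can also phrase the middle step more module-theoretically: $M$ is a faithful finitely generated $S$-module, so $M/\mfa M$ and $S/\mfa S$ have the same dimension over $S$, and finite length is intrinsic to $M/\mfa M$—independent of whether it is regarded over $R$ or over $S$—because $\mfm$ and $\mfm S$ share the same residue field.
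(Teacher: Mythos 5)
Your argument is correct: the paper states this as a well-known fact without proof, so there is no in-text argument to compare against, and your reduction via $\supp_R(M/\mfa M)=\supp_R(M)\cap V(\mfa)=V(\ann(M)+\mfa)=\supp_R(S/\mfa S)$ together with $\dim S=\dim M=d$ is exactly the standard justification. The only points worth being explicit about --- that finite length is the same computed over $R$ or over $S$ for a module killed by $\ann(M)$, and that the $\overline{a_i}$ lie in the maximal ideal $\mfm S$ of $S$ --- you already address.
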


The integer $n$ appearing in the next result plays a key role in the main results. We include a proof for completeness. 

\begin{lem}\label{B&NLemmaExtn}
Let $(R,\mfm)$ be a local Noetherian ring and $M$ a finitely generated $R$-module. 
There exists an integer $n$ such that $\mfm^nM\cap \Gamma_\mfm(M)=(0)$.
\end{lem}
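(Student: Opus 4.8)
The plan is to deduce the statement from two standard facts: the finiteness of the $\mfm$-torsion submodule and the Artin--Rees lemma. Recall that $\Gamma_\mfm(M)=\bigcup_{k\ge 1}(0:_M\mfm^k)$ is the set of elements of $M$ killed by some power of $\mfm$.

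First I would note that $\Gamma_\mfm(M)$ is a finitely generated $R$-module, being a submodule of the Noetherian module $M$. Choosing generators $x_1,\dots,x_r$ and exponents $t_i$ with $\mfm^{t_i}x_i=0$, and setting $t=\max_i t_i$, we obtain $\mfm^t\Gamma_\mfm(M)=(0)$; in other words $\Gamma_\mfm(M)=(0:_M\mfm^t)$. Next I would apply the Artin--Rees lemma to the submodule $\Gamma_\mfm(M)\subseteq M$ relative to the ideal $\mfm$: there exists an integer $c\ge 0$ such that for all $n\ge c$,
$$\mfm^nM\cap\Gamma_\mfm(M)=\mfm^{\,n-c}\bigl(\mfm^cM\cap\Gamma_\mfm(M)\bigr)\subseteq \mfm^{\,n-c}\Gamma_\mfm(M).$$
Taking $n=c+t$ then forces $\mfm^nM\cap\Gamma_\mfm(M)\subseteq\mfm^{\,t}\Gamma_\mfm(M)=(0)$, which is the desired conclusion (and in fact it holds for every $n\ge c+t$).

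I do not expect any real obstacle: the argument is a routine combination of Artin--Rees with the observation that a Noetherian module's torsion submodule is annihilated by a single power of $\mfm$. The only points worth stating carefully are that $\Gamma_\mfm(M)$ is finitely generated (which is exactly where the hypothesis that $M$ is a finitely generated module over the Noetherian ring $R$ enters) and that Artin--Rees applies to the pair $\Gamma_\mfm(M)\subseteq M$, which it does since both are finitely generated over the Noetherian ring $R$.
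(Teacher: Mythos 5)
Your proof is correct, but it takes a different route from the paper's. You use the Noetherian side of $\Gamma_\mfm(M)$: it is finitely generated, hence annihilated by a single power $\mfm^t$, and then Artin--Rees gives $\mfm^nM\cap\Gamma_\mfm(M)\subseteq\mfm^{n-c}\Gamma_\mfm(M)=(0)$ for $n\geq c+t$. The paper instead uses the Artinian side: the descending chain $\mfm^iM\cap\Gamma_\mfm(M)$ stabilizes, and the stable value sits inside $\bigcap_i\mfm^iM$, which vanishes by the Krull Intersection Theorem. The two arguments are cousins --- Krull intersection is itself usually derived from Artin--Rees --- but yours is slightly more quantitative, producing an explicit threshold $c+t$ beyond which every $n$ works, whereas the paper's argument only asserts existence of the stabilization point (though of course its $n$ also works for all larger exponents, since $\mfm^{n+i}M\subseteq\mfm^nM$). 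Both hinge on the finite generation of $M$ over the Noetherian ring $R$; each step you flag (finite generation of the submodule, applicability of Artin--Rees) is justified as you state it, so there is no gap.
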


\begin{proof}
Since $\Gamma_\mfm(M)$ is Artinian, the descending chain of submodules $$(\mfm M\cap \Gamma_\mfm(M))\supseteq(\mfm^2M\cap \Gamma_\mfm(M))\supseteq\cdots$$ must stabilize. That is, there is some $n\in\bn$ such that $$\mfm^{n+i}M\cap \Gamma_\mfm(M)=\mfm^nM\cap \Gamma_\mfm(M)$$ for all integers $i\geq 0$. Thus
\begin{align*}
\mfm^nM\cap \Gamma_\mfm(M) &=\bigcap_{i\geq n}(\mfm^iM\cap \Gamma_\mfm(M))\\
&\subseteq\bigcap_{i\geq n}\mfm^iM\\
&= (0)\end{align*} by the Krull Intersection Theorem. 
\end{proof}

\begin{rmk}
In fact, given any finite-length submodule $L\subseteq M$, we have $\mfm^nM\cap L=(0)$ where $n$ is the integer of Lemma \ref{B&NLemmaExtn}.
\end{rmk}

The next result is in the spirit of \cite[Prop 4.7.13]{Northcott}. We include a proof in order to obtain specific bounds on the powers of $a$ in this special case.

\begin{prop}\label{arlessmoduleprop}
Let $A$ be any commutative ring, $L$ an $A$-module, and $a,b\in A$. Then, for arbitrary positive integers $p\leq q\leq r$ we have the equality $$(ba^rL:a^p)=a^{r-q}(ba^qL:a^p)+(0:_La^p).$$
\end{prop}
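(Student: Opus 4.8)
The plan is to prove the two inclusions separately; the inclusion $\supseteq$ is essentially formal, and the inclusion $\subseteq$ comes down to writing down one well-chosen witness.

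For $\supseteq$, I would take a typical element $x = a^{r-q}y + u$ of the right-hand side, where $a^py \in ba^qL$ and $a^pu = 0$. Then
$$a^px = a^{r-q}(a^py) + a^pu \in a^{r-q}(ba^qL) = ba^rL,$$
using $(r-q)+q = r$, so $x \in (ba^rL:a^p)$. This direction uses only $q\le r$.

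For $\subseteq$, suppose $a^px \in ba^rL$, and fix $z\in L$ with $a^px = ba^rz$. The key step is to exhibit an element of $(ba^qL:a^p)$ whose $a^{r-q}$-multiple agrees with $x$ up to an element killed by $a^p$. I would set $w := ba^{q-p}z$, which makes sense since $p\le q$. Then $a^pw = ba^qz \in ba^qL$, so $w\in(ba^qL:a^p)$; moreover $a^{r-q}w = ba^{r-p}z$, whence $a^p(x - a^{r-q}w) = a^px - ba^rz = 0$, i.e.\ $x - a^{r-q}w \in (0:_La^p)$. Therefore $x = a^{r-q}w + (x - a^{r-q}w)$ lies in $a^{r-q}(ba^qL:a^p) + (0:_La^p)$, as desired.

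I do not expect a genuine obstacle here: the statement is an identity that falls out immediately once the witness $w = ba^{q-p}z$ is produced. The only point requiring attention is the exponent bookkeeping — checking that each power of $a$ appearing, namely $q-p$, $r-q$, and $r-p = (r-q)+(q-p)$, is a nonnegative integer, which is precisely what the hypothesis $p\le q\le r$ provides.
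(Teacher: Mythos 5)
Your proof is correct and follows essentially the same route as the paper's: the containment $\supseteq$ by direct multiplication, and the containment $\subseteq$ via the same witness $ba^{q-p}z$ (the paper calls it $ba^{q-p}y$), with the same exponent bookkeeping justified by $p\le q\le r$. No issues.
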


\begin{proof}
First let $x\in(ba^rL:a^p)$. Then $a^px=ba^ry$ for some $y$ in $L$. Now $a^p(x-ba^{r-p}y)=0$ so that $x-ba^{r-p}y\in(0:_La^p)$. Moreover $$ba^{r-p}y=a^{r-q}\cdot ba^{q-p}y\in a^{r-q}(ba^qL:a^p).$$ We now have $$x=ba^{r-p}y+(x-ba^{r-p}y)\in a^{r-q}(ba^qL:a^p)+(0:_La^p).$$ For the other inclusion, let $x\in a^{r-q}(ba^qL:a^p)+(0:_La^p)$ and write $x=a^{r-q}y+z$ with $y$ in $(ba^qL:a^p)$ and $z$ in $(0:_La^p)$. We can write $a^py=ba^qw$ for some $w$ in $L$. Thus we may rewrite $a^px$ as 
\begin{align*}a^px &= a^p(a^{r-q}y+z)\\
&=a^{r-q}ba^qw+a^pz\\
&=ba^rw,
\end{align*}
which is in $ba^rL$. Thus $x\in(ba^rL:a^p)$ as desired.
\end{proof}

We will use the next result in Sections 3 and 4.

\begin{lem}\label{mylem}
Let $R$ be a local Noetherian ring, $J\subseteq I$ ideals of $R$ with $\sqrt I=\sqrt J$, and $N$ a nonzero finitely generated $R$-module. If $\Hom_R(R/J,N)$ is decomposable, then so is $\Hom_R(R/I,N)$.
\end{lem}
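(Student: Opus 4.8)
The plan is to identify both modules of homomorphisms with annihilator submodules of $N$ and to exploit that $\sqrt I=\sqrt J$ forces $I^k\subseteq J$ for some $k\ge 1$ (valid since $I$ is finitely generated over the Noetherian ring $R$).

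Under the standard identification $\Hom_R(R/K,N)\cong(0:_N K)$ for an ideal $K$, set $H:=\Hom_R(R/J,N)=(0:_N J)$. Since $J\subseteq I$, anything annihilated by $I$ is annihilated by $J$, so $(0:_N I)\subseteq H$, and hence $\Hom_R(R/I,N)\cong(0:_N I)=(0:_H I)$. (Equivalently, $\Hom_R(R/I,N)\cong\Hom_R(R/I,\Hom_R(R/J,N))$ by tensor–Hom adjunction, because $I+J=I$.) Thus it suffices to show $(0:_H I)$ is decomposable, knowing that $H$ is.

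Now invoke the radical hypothesis: choose $k\ge 1$ with $I^k\subseteq J$. Since $JH=0$, also $I^kH=0$, so $I$ acts nilpotently on $H$. Write $H=H_1\oplus H_2$ with each $H_i\ne 0$. Taking $I$-annihilators commutes with this direct sum, so
$$(0:_H I)=(0:_{H_1}I)\oplus(0:_{H_2}I).$$
Each summand is nonzero: fixing $i$, since $I^0H_i=H_i\ne 0$ while $I^kH_i=0$, there is a largest $t\ge 0$ with $I^tH_i\ne 0$; then $I\cdot(I^tH_i)=I^{t+1}H_i=0$, so $0\ne I^tH_i\subseteq(0:_{H_i}I)$. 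Therefore $\Hom_R(R/I,N)\cong(0:_H I)$ is a direct sum of two nonzero submodules, i.e.\ decomposable.

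The argument is short and I foresee no genuine obstacle; the only points needing care are the standard fact that $\sqrt I=\sqrt J$ with $I$ finitely generated yields $I^k\subseteq J$, and the observation that a nonzero module on which $I$ acts nilpotently has nonzero $I$-socle — this last point is precisely what keeps both direct summands $(0:_{H_i}I)$ from collapsing to zero.
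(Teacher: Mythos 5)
Your proof is correct. Structurally it follows the same skeleton as the paper's argument: identify $\Hom_R(R/I,N)$ with the $I$-annihilator inside $H=\Hom_R(R/J,N)\cong(0:_NJ)$ (the paper does this via the tensor--Hom adjunction you mention parenthetically), observe that taking $I$-annihilators respects the given decomposition $H=H_1\oplus H_2$, and then reduce to showing $(0:_{H_i}I)\neq 0$ for each nonzero summand. The only genuine divergence is in that last nonvanishing step. The paper picks $P\in\ass_R H_i$, notes $J\subseteq P$ hence $I\subseteq P$ by the radical hypothesis, and produces a nonzero composite $R/I\twoheadrightarrow R/P\hookrightarrow H_i$. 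You instead use $\sqrt I=\sqrt J$ together with $I$ finitely generated to get $I^k\subseteq J$, so $I$ acts nilpotently on $H_i$, and then take the last nonzero power $I^tH_i$ to land inside the $I$-socle. Your route is more elementary (no associated primes needed, only that $I$ is finitely generated), while the paper's route localizes the use of the radical hypothesis to a single containment of primes; both work for arbitrary modules $N$ over a Noetherian ring. No gaps.
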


\begin{proof}
Suppose that $\Hom_R(R/J,N)= X\oplus Y$ where $X$ and $Y$ are nonzero $R$-modules. There are isomorphisms
\begin{align*}\Hom_R(R/I,N)&\cong\Hom_R((R/I)\otimes_R(R/J),N)\\
&\cong\Hom_R(R/I,\Hom_R(R/J,N))\\
&\cong\Hom_R(R/I,X)\oplus\Hom_R(R/I, Y).
\end{align*}
By symmetry, it suffices to show that $\Hom_R(R/I,X)\neq 0$. It is clear that $JX=(0)$ since $X\subseteq\Hom_R(R/J,N)$. 
Choose $P\in\ass_RX$ such that $J\subseteq P$. Since $\sqrt{J}=\sqrt{I}$, one has $I\subseteq P$, so there are maps $$R/I\twoheadrightarrow R/P\hookrightarrow X.$$ The composition of these maps is nonzero, and so $\Hom_R(R/I,X)\neq (0)$ as desired.
\end{proof}

This final result will be used in Section 4.

\begin{lem}\label{nonCMstability}
Let $R$ be a local Noetherian ring and $M$ a $R$-module of dimension $d\geq 2$. If $M$ is not Cohen-Macaulay, then for any system of parameters $a_1,\ldots,a_d$ of $M$, there exist positive integers $i$ and $s$ such that $M/a_i^sM$ is not Cohen-Macaulay.
\end{lem}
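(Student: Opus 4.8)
The plan is to establish the contrapositive: if $M/a_i^sM$ is Cohen--Macaulay for every $i\in\{1,\ldots,d\}$ and every integer $s\ge 1$, then $M$ is Cohen--Macaulay. The key move is to manufacture an $M$-regular element out of this hypothesis, using a parameter \emph{other} than the one we eventually mod out by. Since $d\ge 2$, fix an index $i\ne 1$. For each $s\ge 1$ the module $M/a_i^sM$ has dimension $d-1\ge 1$, and $a_1,\ldots,\widehat{a_i},\ldots,a_d$ is a system of parameters of $M/a_i^sM$, because the ideal these $d-1$ elements generate, augmented by $a_i^s$, has the same radical as the parameter ideal $(a_1,\ldots,a_d)$ of $M$. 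As $M/a_i^sM$ is Cohen--Macaulay, this system of parameters is a regular sequence on it, so $a_1$, being its leading term, is a nonzerodivisor on $M/a_i^sM$ --- and this holds for every $s\ge 1$.

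Next I would promote this to: $a_1$ is a nonzerodivisor on $M$ itself. If $a_1z=0$ with $z\in M$, then the image of $z$ in $M/a_i^sM$ is annihilated by $a_1$ and hence is $0$, so $z\in a_i^sM\subseteq\mfm^sM$; as this holds for all $s$, the Krull Intersection Theorem forces $z\in\bigcap_{s\ge 1}\mfm^sM=(0)$, so $z=0$. Thus $a_1\in\mfm$ is $M$-regular, and the standard formula for depth modulo a regular element gives $\depth M=\depth(M/a_1M)+1$. Since $M/a_1M=M/a_1^1M$ is Cohen--Macaulay of dimension $d-1$ by hypothesis, $\depth(M/a_1M)=d-1$, whence $\depth M=d=\dim M$ and $M$ is Cohen--Macaulay, as desired.

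I do not anticipate a genuine obstacle; the subtlety is simply to notice that one cannot in general work with $a_1$ alone (the quotients $M/a_1^sM$ may all be Cohen--Macaulay while $M$ is not), so the availability of a second parameter $a_i$ is essential --- and this is exactly what the hypothesis $d\ge 2$ provides, along with ensuring $\dim M/a_i^sM\ge 1$ so that its being Cohen--Macaulay actually yields a nonzerodivisor. For $d=1$ the statement is false, since each $M/a_1^sM$ is then zero-dimensional, hence trivially Cohen--Macaulay --- a reassuring consistency check. Note that no induction on $d$ is needed: the hypothesis already hands us $M/a_1M$ Cohen--Macaulay, and once $a_1$ is known to be $M$-regular the conclusion is immediate. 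The background facts invoked without proof are standard: a system of parameters of a Cohen--Macaulay module is a regular sequence on it (in particular its leading term is a nonzerodivisor on the module), and if $x\in\mfm$ is an $M$-regular element then $\dim M/xM=\dim M-1$ and $\depth M/xM=\depth M-1$.
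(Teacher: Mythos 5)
Your proof is correct and rests on the same key idea as the paper's: an element that is regular on $M/a_i^sM$ for every $s\ge 1$ must be regular on $M$ itself, by the Krull Intersection Theorem. The only difference is organizational --- you argue the contrapositive and conclude by counting depth along the regular element $a_1$ using the Cohen--Macaulayness of $M/a_1M$, whereas the paper first disposes of the case where some $a_i$ is $M$-regular and then derives a contradiction from the assumption that every $a_i$ is a zerodivisor.
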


\begin{proof}
If some $a_i$ is $M$-regular, then $M/a_iM$ is not Cohen-Macaulay, so we may assume that each $a_i$ is a zero-divisor on $M$. Suppose, by way of contradiction that $M/a_1^sM$ is Cohen-Macaulay for each $s\geq 1$. Then $a_2,\ldots,a_d$ is a regular sequence on $M/a_1^sM$ for all integers $s\geq 1$. In particular $a_2$ is $M/a_1^sM$-regular for all integers $s\geq1$. We claim this implies $a_2$ is $M$-regular, which is a contradiction. Indeed, suppose $a_2m=0$ for some $m\in M$. Then $a_2\overline{m}=0$ in $M/a_1^sM$ for all integers $s\geq 1$ so that $m\in a_1^sM$ for all integers $s\geq 1$. By the Krull Intersection Theorem, we have $m=0$ implying $a_2$ is $M$-regular, a contradiction.
\end{proof}

\section{Dimension One}

We start with results on modules of dimension one and depth zero since we are able to obtain stronger results in this case. We show $\Hom_R(R/\mfa,M/\mfb M)$ will decompose if the parameter ideal $\mfb$ is chosen to be in a sufficiently high power of the maximal ideal.

\begin{thm}\label{dim1stability}
Let $(R,\mfm)$ be a local Noetherian ring, $M$ a nonzero finitely generated $R$-module of dimension one and depth zero, and $n$ an integer such that $\mfm^nM\cap\Gamma_\mfm(M)=(0)$. For any parameter $a$ of $M$, and any parameter $b$ of $M$ with $b\in(a^{n+1})$, the following $R$-module is decomposable: $$\Hom_R(R/(a),M/bM).$$
\end{thm}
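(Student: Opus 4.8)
The plan is to exhibit an explicit nontrivial direct-sum decomposition of $\Hom_R(R/(a), M/bM)$ by splitting off the $\mfm$-torsion part. Write $\Gamma = \Gamma_\mfm(M)$ and recall that since $\dim M = 1$ and $\depth M = 0$, the module $\Gamma$ is nonzero of finite length, and $M/\Gamma$ is a Cohen--Macaulay module of dimension one (so $a$ is a nonzerodivisor on $M/\Gamma$). First I would identify $\Hom_R(R/(a), M/bM)$ with $(bM :_M a)/bM$ via the standard isomorphism $\Hom_R(R/(a), N) \cong (0 :_N a)$ applied to $N = M/bM$. Since $b \in (a^{n+1})$, write $b = a^{n+1}c$ for some element $c$ (which is again a parameter of $M$ by the hypothesis $b \in (a^{n+1})$ being a parameter), and apply Proposition~\ref{arlessmoduleprop} with $A = R$, $L = M$, the element $a$, the element $c$ in the role of ``$b$'', exponents $p = 1$, $q = n$, $r = n+1$: this gives
\[
(bM :_M a) = (ca^{n+1}M :_M a) = a\,(ca^{n}M :_M a) + (0 :_M a).
\]

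Next I would analyze the two summands modulo $bM = ca^{n+1}M$. The key observations are: (i) $(0:_M a) \subseteq \Gamma$ since any element killed by the parameter $a$ is $\mfm$-power torsion in a one-dimensional module; hence $(0 :_M a) \subseteq \Gamma \subseteq \mfm^{\,?}$... more precisely I want to use that $\Gamma \cap a M$-type intersections vanish. Concretely, because $a \in \mfm$ and $b = a^{n+1}c \in \mfm^{n+1}$, and more to the point $bM = a^{n+1}cM \subseteq a^{n}M \cap (\text{something})$, I expect the intersection $(0:_M a) \cap \big(a(ca^n M :_M a) + bM\big)$ to be forced into $\mfm^n M \cap \Gamma = (0)$, using the Artin--Rees/Krull intersection bound built into $n$. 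Symmetrically, the first summand $a(ca^n M :_M a)$ lies in $aM$, and I would check that its image in $M/bM$ is a complement: the sum of the two images is all of $(bM:_M a)/bM$ by the displayed equation, and the intersection is trivial by the choice of $n$. This yields
\[
\frac{(bM :_M a)}{bM} \;=\; \frac{a(ca^n M :_M a) + bM}{bM} \;\oplus\; \frac{(0:_M a) + bM}{bM},
\]
and it remains to see both summands are nonzero: the second is nonzero because $(0:_M a) \neq 0$ (as $a$ is a zerodivisor on $M$, $M$ having depth zero) and $(0:_M a) \cap bM = 0$; the first is nonzero because its quotient surjects onto $(bM:_M a)/\big((0:_M a)+bM\big)$, which is isomorphic via multiplication arguments to the nonzero module $\Hom_R(R/(a), (M/\Gamma)/b(M/\Gamma))$ computed on the Cohen--Macaulay module $M/\Gamma$ (nonzero of length equal to that of $M/aM$ minus a correction, by Remark~\ref{applyRees}).

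The main obstacle I anticipate is verifying that the two submodules of $M/bM$ genuinely intersect trivially and genuinely span — i.e., that the algebraic identity from Proposition~\ref{arlessmoduleprop} descends to an \emph{internal direct sum} in $M/bM$ rather than merely a (possibly non-direct) sum. This is precisely where the numerical hypothesis $b \in (a^{n+1})$ with $n$ as in Lemma~\ref{B&NLemmaExtn} must be used in full force: one needs that any element of $(0:_M a)$ lying in $a(ca^n M :_M a) + bM$ can be pushed into $\mfm^n M \cap \Gamma_\mfm(M) = (0)$, which requires carefully tracking powers of $a$ and invoking that $(0:_M a) \subseteq \Gamma_\mfm(M)$. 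A secondary technical point is confirming nonvanishing of the first summand; I would handle this by reducing modulo $\Gamma_\mfm(M)$ and applying the Cohen--Macaulay computation of Remark~\ref{applyRees} to $M/\Gamma_\mfm(M)$, together with the fact that passing to $M/\Gamma_\mfm(M)$ changes the relevant lengths in a controlled way.
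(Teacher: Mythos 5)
Your proposal follows essentially the same route as the paper's proof: identify $\Hom_R(R/(a),M/bM)$ with $(bM:_Ma)/bM$, use Proposition \ref{arlessmoduleprop} (your choice $p=1$, $q=n$, $r=n+1$ gives $a(ca^nM:_Ma)=ca^nM$, equivalent to the paper's version) to write $(bM:_Ma)=ca^nM+(0:_Ma)$, and check the sum is direct modulo $bM$ using $(0:_Ma)\subseteq\Gamma_\mfm(M)$ together with $ca^nM\subseteq\mfm^nM$ and $\mfm^nM\cap\Gamma_\mfm(M)=(0)$. The one place you diverge is the nonvanishing of the summand $ca^nM/ca^{n+1}M$, where the paper just applies Nakayama's Lemma (if $ca^nM=ca^{n+1}M=a\cdot ca^nM$ then $ca^nM=0$, contradicting that $b=ca^{n+1}$ is a parameter); this is both simpler and more airtight than your proposed detour through $M/\Gamma_\mfm(M)$ and Rees' theorem, whose identification of $ca^nM/ca^{n+1}M$ with a Hom module over the Cohen--Macaulay quotient you would still need to justify.
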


\begin{rmk}The integer $n$ in the statement exists by Lemma \ref{B&NLemmaExtn}. Note that $n\geq 1$ because $\Gamma_\mfm(M)\neq (0)$.
\end{rmk}

\begin{proof}[Proof of Theorem \ref{dim1stability}]
Set $S:=R/\ann(M)$ and let $(\,\bar{\hspace{0.05in}}\,)$ denote the image in $S$. In light of Fact \ref{sopofMorR}, $\overline{a},\overline{b}$ are parameters of $S$. Moreover there is an $R$-module isomorphism
$$\Hom_S(S/(\overline{a}),M/\overline{b}M)\cong\Hom_R(R/(a),M/bM).$$ Thus, by replacing $R$ with $S$, we may assume that $M$ is faithful as an $R$-module.

Write $b=ca^{n+1}$. Since $M$ is faithful, we have $\sqrt{(a)}=\mfm$ and so $$(0:_Ma)\subseteq\Gamma_{(a)}(M)=\Gamma_\mfm(M).$$ Thus we know \begin{equation}\label{intiszero}
(0:_Ma)\cap ca^nM\subseteq \Gamma_\mfm(M)\cap\mfm^nM=(0).
\end{equation}

By Proposition \ref{arlessmoduleprop} we have
\begin{equation}\label{messysum}(ca^{n+1}M:a)=a^{n}(caM:a)+(0:_Ma).\end{equation} We now claim that $$a^{n}(caM:a)=ca^{n}M.$$ 
Indeed, it is clear that elements of $ca^nM$ are also elements of $a^n(caM:a)$. For the reverse inclusion, let $x\in a^{n}(caM:a)$ and write $x=a^nm$ for some $m\in(caM:a)$. So we have $am=cam'$ for some $m'\in M$. Then
$$x=a^nm=a^{n-1}\cdot am=ca^nm'\in ca^nM.$$ Equation (\ref{messysum}) now becomes \begin{equation}\label{sum}(ca^{n+1}M:a)=c a^{n}M+(0:_Ma).\end{equation}

Next we want to show \begin{equation}\label{int}ca^{n+1}M=ca^{n}M\cap\left[(0:_Ma)+ca^{n+1}M\right].\end{equation} 
Elements in $ca^{n+1}M$ also live in both $ca^nM$ and $(0:_Ma)+ca^{n+1}M$. For the other inclusion, let $x\in ca^{n}M\cap[(0:_Ma)+ca^{n+1}M]$ and write $$x=ca^{n}m=\eta+ca^{n+1}m'$$ for some $m,m'\in M$, and $\eta\in(0:_Ma)$. Then \begin{align*}\eta&=ca^{n}m-ca^{n+1}m'\\
&\in(0:_Ma)\cap ca^{n}M = (0) & \text{ by }(\ref{intiszero}).
\end{align*} Equation (\ref{int}) follows. Now there are isomorphisms
\begin{align*}
\Hom_R(R/(a),M/bM) &\cong \frac{(bM:a)}{bM}\\
&\cong \frac{ca^{n}M+(0:_Ma)}{ca^{n+1}M} & \text{ by }(\ref{sum})\\
&\cong \frac{ca^{n}M}{ca^{n+1}M}\oplus\frac{(0:_Ma)+ca^{n+1}M}{ca^{n+1}M} & \text{ by }(\ref{int}).
\end{align*}
All that remains to prove is that both summands are nonzero. 

If the summand on the left were zero, then $ca^{n}M=(0)$ by Nakayama's Lemma, a contradiction as $ca^{n+1}=b$ is a parameter of $M$. 

If the summand on the right were zero, then $$(0:_Ma)\subseteq ca^{n+1}M.$$ By Equation (\ref{intiszero}) we have $$(0:_Ma)=(0:_Ma)\cap ca^{n+1}M=(0).$$ This is also a contradiction as $\depth_RM=0$. Thus $\Hom_R(R/(a),M/bM)$ is decomposable, as desired.
\end{proof}

When $R$ is a Cohen-Macaulay ring, we know from Remark \ref{applyRees} that the  $R/\mfa$-module $\Hom_R(R/\mfa,R/\mfb)\cong R/\mfa$ is not only indecomposable, but also free. When $R$ is one dimensional and not Cohen-Macaulay, we can prove that in addition to decomposing, this module will be non-free if the parameters are chosen to be in sufficiently high powers of the maximal ideal.

\begin{thm}\label{nonfreestabthm}
Let $(R,\mfm)$ be a Noetherian local ring of dimension one and depth zero, and $n$ an integer such that $\mfm^n\cap\Gamma_\mfm(R)=(0)$. For any parameter $a$ in $\mfm^n$ and any parameter $b$ in $(a^2)$, the $R/(a)$-module $$\Hom_R(R/(a),R/(b))$$ is decomposable and has a non-free summand.
\end{thm}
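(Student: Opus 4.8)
The plan is to obtain a direct sum decomposition of $\Hom_R(R/(a),R/(b))$ by the same argument as in the proof of Theorem \ref{dim1stability} (with $M=R$), and then to show that one of the two resulting summands is not free over $R/(a)$. Write $b=ca^2$. Since $a$ is a parameter of the one-dimensional ring $R$ we have $\sqrt{(a)}=\mfm$, and hence $(0:_Ra)\subseteq\Gamma_{(a)}(R)=\Gamma_\mfm(R)$. In Theorem \ref{dim1stability} the hypothesis $b\in(a^{n+1})$ was used to place a relevant submodule inside $\mfm^n$; here that role is filled by the hypothesis $a\in\mfm^n$, which gives $caR\subseteq\mfm^n$ and therefore
$$(0:_Ra)\cap caR\subseteq\Gamma_\mfm(R)\cap\mfm^n=(0).$$

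Using Proposition \ref{arlessmoduleprop} with $p=q=1$ and $r=2$ (rather than $r=n+1$, as in Theorem \ref{dim1stability}), the computation of that proof carries over: one gets $(ca^2R:a)=a(caR:a)+(0:_Ra)=caR+(0:_Ra)$ (the analogue of Equation (\ref{sum})) and $ca^2R=caR\cap\big[(0:_Ra)+ca^2R\big]$ (the analogue of Equation (\ref{int})), and consequently
$$\Hom_R(R/(a),R/(b))\cong\frac{(bR:a)}{bR}\cong\frac{caR}{ca^2R}\oplus\frac{(0:_Ra)+ca^2R}{ca^2R}.$$
Exactly as in Theorem \ref{dim1stability}, the left summand is nonzero (otherwise $caR=a\cdot caR$ and Nakayama's Lemma forces $caR=(0)$, contradicting that $b=ca^2$ is a parameter), and the right summand $B:=\big((0:_Ra)+ca^2R\big)/ca^2R$ is nonzero (otherwise $(0:_Ra)\subseteq ca^2R\subseteq caR$, forcing $(0:_Ra)=(0)$ and contradicting $\depth_RR=0$). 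This establishes decomposability.

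It remains to show $B$ is not free as an $R/(a)$-module. Since $(0:_Ra)\cap ca^2R\subseteq(0:_Ra)\cap caR=(0)$, there is an isomorphism of $R/(a)$-modules $B\cong(0:_Ra)$, so it suffices to show $(0:_Ra)$ is not free over $R/(a)$. Suppose it were; since it is finitely generated and nonzero (again using $\depth_RR=0$), we would have $(0:_Ra)\cong(R/(a))^{\oplus t}$ for some integer $t\geq1$. From $(0:_Ra)\subseteq\Gamma_\mfm(R)$ we get $\mfm^n(0:_Ra)\subseteq\mfm^n\cap\Gamma_\mfm(R)=(0)$, so $(0:_Ra)$ is annihilated by $\mfm^n$; hence $\mfm^n\subseteq\ann\big((R/(a))^{\oplus t}\big)=(a)$, and since $a\in\mfm^n$ this gives $(a)=\mfm^n$. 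Thus $(0:_Ra)\cong(R/\mfm^n)^{\oplus t}$ and so $\ell\big((0:_Ra)\big)=t\cdot\ell(R/\mfm^n)$. On the other hand, $\mfm^n\cap\Gamma_\mfm(R)=(0)$ yields an injection $\Gamma_\mfm(R)\hookrightarrow R/\mfm^n$, and since all modules involved have finite length we obtain
$$t\cdot\ell(R/\mfm^n)=\ell\big((0:_Ra)\big)\leq\ell\big(\Gamma_\mfm(R)\big)\leq\ell(R/\mfm^n).$$
As $t\geq1$, this forces $t=1$ and equality everywhere; in particular $\Gamma_\mfm(R)\hookrightarrow R/\mfm^n$ is onto, so $R=\Gamma_\mfm(R)+\mfm^n\subseteq\mfm$, which is absurd. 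Hence $B$ is a nonzero, non-free direct summand of $\Hom_R(R/(a),R/(b))$, as claimed.

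I expect the fussiest step to be re-deriving the decomposition with the powers of $a$ redistributed (checking that trading $b\in(a^{n+1})$ for $a\in\mfm^n$ really does leave the argument of Theorem \ref{dim1stability} intact). Once that is done, the non-freeness follows from the short observation that $(0:_Ra)$ is killed by $\mfm^n$, so that freeness would force $(a)=\mfm^n$ and then collapse the length inequality coming from $\Gamma_\mfm(R)\hookrightarrow R/\mfm^n$.
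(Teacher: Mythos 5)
Your proof is correct. The decomposition half is essentially the paper's own argument: you produce the same splitting
$$\Hom_R(R/(a),R/(b))\cong\frac{caR}{ca^2R}\oplus\frac{(0:_Ra)+ca^2R}{ca^2R},$$
via Proposition \ref{arlessmoduleprop} with $p=q=1$, $r=2$, and the same two Nakayama/depth arguments for nonvanishing of the summands (the paper phrases the second summand using $I=\Gamma_\mfm(R)$, but it observes that $(0:a)=I$ for a parameter $a\in\mfm^n$, so the decompositions coincide). Where you genuinely diverge is the non-freeness step. The paper shows the \emph{left} summand $(ca)/(ca^2)$ is not free by a one-line annihilator argument: since $a\in\mfm^n$ forces $a\Gamma_\mfm(R)=(0)$, any nonzero $y\in\Gamma_\mfm(R)$ has nonzero image in $R/(a)$ yet kills $(ca)/(ca^2)$. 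You instead show the \emph{right} summand, which you correctly identify with $(0:_Ra)$, is not free, via the chain ``freeness $\Rightarrow(a)=\mfm^n\Rightarrow t\cdot\ell(R/\mfm^n)\le\ell(\Gamma_\mfm(R))\le\ell(R/\mfm^n)\Rightarrow\Gamma_\mfm(R)\twoheadrightarrow R/\mfm^n$,'' which is absurd since $\Gamma_\mfm(R)\subseteq\mfm$ in positive dimension. Every step of this checks out ($\mfm^n$ does annihilate $(0:_Ra)$ because $(0:_Ra)\subseteq\Gamma_\mfm(R)$ and $\mfm^n\cap\Gamma_\mfm(R)=(0)$, and all modules in your length comparison have finite length). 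Your route is longer but yields a bonus the paper's does not state: combined with the paper's observation, \emph{neither} summand is free. The paper's route is shorter and pinpoints an explicit nonzero element of $R/(a)$ in the annihilator; yours is a self-contained counting argument that avoids having to exhibit such an element.
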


\begin{rmk}Again, the integer $n$ in the statement exists by Lemma \ref{B&NLemmaExtn} and must be positive since $\Gamma_\mfm(R)\neq (0)$.
\end{rmk}

\begin{proof}[Proof of Theorem \ref{nonfreestabthm}]
We will first prove that the module decomposes. Both the proof of this fact and the decomposition obtained are similar to those found in the proof of Theorem \ref{dim1stability}. Write $I=\Gamma_\mfm(R)$. For any $x\in\mfm^n$, we know $(x)\cap I=(0)$ and hence $xI=(0)$. If $x\in\mfm^n$ is also a parameter, then we know $\Gamma_{(x)}(R)=I$, and $(0:x)=I$ as well. Indeed, $\sqrt{(x)}=\mfm$ and since $xI=0$ we have $$I\subseteq (0:x)\subseteq\Gamma_{(x)}(R)=I.$$

Let $a\in\mfm^n$ and $b\in(a^2)$ be parameters and write $b=ca^2$. Applying Proposition \ref{arlessmoduleprop} with $p=q=1$ and $r=2$, we obtain the equality \begin{equation}\label{freestabsumugly}((ca^2):a)=a((ca):a)+(0:a).\end{equation} We now note that $a((ca):a)=(ca).$ We may thus rewrite Equation (\ref{freestabsumugly}) as 
\begin{equation}\label{freestabsum}
((b):a)=(ca)+I.
\end{equation} 
Next we wish to show that
\begin{equation}\label{freestabint}
(ca^2)=(ca)\cap\left[I+(ca^2)\right].
\end{equation}
The inclusion $\subseteq$ is clear. For the other inclusion, let $x\in (ca)\cap[I+(ca^2)]$ and write $x=rca=\eta+r'ca^2$ for some $r,r'\in R$ and $\eta\in I$. Then
\begin{align*}
\eta&=rca-r'ca^2\\
&\in (a)\cap I\\
&\subseteq \mfm^n\cap I=(0).
\end{align*}
 Hence $\eta=0$ and $x=r'ca^2\in (a^2)$. Hence there are isomorphisms of $R/(a)$-modules
\begin{align*}
\Hom_R(R/(a),R/(b)) &\cong \frac{((b):a)}{(b)}\\
&\cong \frac{(ca)+I}{(ca^2)} & \text{ by }(\ref{freestabsum})\\
&\cong \frac{(ca)}{(ca^2)}\oplus\frac{I+(ca^2)}{(ca^2)} & \text{ by }(\ref{freestabint}).
\end{align*}
Next we show that both summands are nonzero. 

If the summand on the left were zero, then Nakayama's Lemma implies $ca=0$, a contradiction as $ca^2=b$ is a parameter and hence nonzero. 

If the summand on the right were zero, then $I\subseteq (ca^2)$ so that $$I=I\cap(ca^2)\subseteq I\cap (a)=(0),$$ a contradiction as the depth of $R$ is zero. 

We now show that the summand on the left, that is $(ca)/(ca^2)$, is not a free $R/(a)$-module. To that end, recall that $I\cap (a)=(0)$, but $I\neq (0)$ so we can choose an element $y\in I\setminus(a)$. Since $aI=(0)$, $\overline{y}$ is a nonzero element of $R/(a)$ that annihilates $(ca)/(ca^2)$, and hence $(ca)/(ca^2)$ cannot be free as an $R/(a)$-module.
\end{proof}

\section{Higher Dimensions}

In higher dimensions, we can also prove a decomposition theorem. However, Example \ref{cantextendex} shows that Theorem \ref{dim1stability} is not strong enough to use the induction technique in Theorem \ref{higherdimdec} to prove there is an integer $N$ such that $\Hom_R(R/\mfa,R/(a_1^{n_1},\ldots,a_d^{n_d}))$ decomposes for all $n_i\geq N$. K. Bahmanpour and R. Naghipour \cite{BandN} prove that when $R$ is not Cohen-Macaulay $\Hom_R(R/\mfa,R/\mfb)$ is not cyclic for some parameter ideals $\mfa$ and $\mfb$ with $\mfb\subseteq\mfa$.

\begin{thm}\label{higherdimdec}
Let $R$ be a local Noetherian ring and $M$ a finitely generated $R$-module of dimension $d$. If $M$ is not Cohen-Macaulay, then, for any system of parameters ${\bf a}=a_1,\ldots,a_d$ of $M$, there exist integers $n_1,\ldots,n_d\in\bn$ such that the following $R$-module is decomposable: $$\Hom_R(R/({\bf a}),M/(a_1^{n_1},\ldots,a_d^{n_d})M).$$
\end{thm}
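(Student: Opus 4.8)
The plan is to induct on the dimension $d$ of $M$. For the base case $d=1$, note that since $M$ is not Cohen-Macaulay and $\dim M = 1$ we must have $\depth_R M = 0$, so Theorem \ref{dim1stability} applies. Choosing $n$ as in Lemma \ref{B&NLemmaExtn} and setting $n_1 = n+1$, the element $a_1^{n_1}$ is again a parameter of $M$ (its support equals that of $M/a_1M$) and lies in $(a_1^{n+1})$; hence $\Hom_R(R/(a_1), M/a_1^{n_1}M)$ is decomposable by Theorem \ref{dim1stability}, which is the $d=1$ case of the claim.

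For the inductive step, suppose $d \geq 2$ and that the theorem holds for modules of dimension $d-1$. Apply Lemma \ref{nonCMstability} to the system of parameters ${\bf a}$ to obtain an index $i$ and an integer $s$ with $M/a_i^sM$ not Cohen-Macaulay; relabeling the $a_j$ (which alters neither the ideal $({\bf a})$ nor the form of the conclusion) we may take $i=1$. Put $M' := M/a_1^sM$. I would then check the two routine points that $\dim M' = d-1$ and that $a_2,\ldots,a_d$ is a system of parameters of $M'$, both of which follow because $a_1,\ldots,a_d$ is a system of parameters of $M$ and the ideal $(a_1^s,a_2,\ldots,a_d)$ has the same radical as $({\bf a})$. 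Since $M'$ is finitely generated of dimension $d-1$ and not Cohen-Macaulay, the induction hypothesis supplies integers $n_2,\ldots,n_d$ for which $\Hom_R(R/(a_2,\ldots,a_d), N)$ is decomposable, where $N := M'/(a_2^{n_2},\ldots,a_d^{n_d})M' = M/(a_1^s,a_2^{n_2},\ldots,a_d^{n_d})M$.

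To conclude, set $n_1 := s$, so that $N = M/(a_1^{n_1},\ldots,a_d^{n_d})M$; it then remains only to upgrade decomposability of $\Hom_R(R/(a_2,\ldots,a_d), N)$ to decomposability of $\Hom_R(R/({\bf a}), N)$. Since $a_1^{n_1}$ annihilates $N$, we have $\Hom_R(R/(a_2,\ldots,a_d), N) \cong \Hom_R(R/J', N)$ where $J' := (a_1^{n_1},a_2,\ldots,a_d)$, and this ideal satisfies $J' \subseteq ({\bf a})$ with $\sqrt{J'} = \sqrt{({\bf a})}$, so Lemma \ref{mylem} gives that $\Hom_R(R/({\bf a}), N)$ is decomposable, as desired. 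The main obstacle is really the bookkeeping of the inductive step: keeping non-Cohen-Macaulayness alive after passing to $M' = M/a_1^sM$, and bridging the gap between the ideal $(a_2,\ldots,a_d)$ produced by the induction hypothesis and the full parameter ideal $({\bf a})$. Lemmas \ref{nonCMstability} and \ref{mylem} are tailored precisely to these two issues, so once they are in hand the argument is essentially formal; I would just take care that the relabeling is harmless and that $a_1^{n+1}$ in the base case is genuinely a parameter of $M$.
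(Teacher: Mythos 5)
Your proposal is correct and follows essentially the same route as the paper: induction on $d$ with Theorem \ref{dim1stability} as the base case, Lemma \ref{nonCMstability} to keep non-Cohen-Macaulayness alive after passing to $M/a_1^{s}M$, and Lemma \ref{mylem} applied to $(a_1^{n_1},a_2,\ldots,a_d)\subseteq({\bf a})$ to bridge back to the full parameter ideal. Your explicit observation that $a_1^{n_1}$ annihilates $N$, which justifies the identification with $\Hom_R(R/(a_1^{n_1},a_2,\ldots,a_d),N)$, is exactly the isomorphism the paper invokes.
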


\begin{proof}
As in the proof of Theorem \ref{dim1stability}, we may reduce to the case that $M$ is a faithful module. We proceed by induction on $d$, the case $d=1$ being covered by Theorem \ref{dim1stability}.

Assume, now, that $d\geq 2$. By Lemma \ref{nonCMstability}, we can find some $i\leq d$ and a positive integer $n_i$ such that $M/a_i^{n_i}M$ is not Cohen-Macaulay. We may harmlessly assume $i=1$. Set $$\overline{R}:=R/(a_1^{n_1})\qquad\text{and}\qquad\overline{\mfa}:=(\overline{a_2},\ldots,\overline{a_d}).$$ Then $\overline{\mfa}$ is a parameter ideal of $\overline{M}$. Since $\overline{M}$ has dimension $d-1$, by induction there are natural numbers $n_2,\ldots, n_d$ such that the $R$-module
 $$U:=\Hom_{\overline{R}}(\overline{R}/\overline{\mfa},\overline{M}/(\overline{a_2}^{n_2},\ldots,\overline{a_d}^{n_d})\overline{M})$$ decomposes. Since there is an isomorphism $$U\cong\Hom_R(R/(a_1^{n_1},a_2,\ldots,a_d),M/(a_1,a_2^{n_2},\ldots,a_d^{n_d})M),$$ then applying Lemma \ref{mylem}, gives the desired decomposition.
\end{proof}

The result below is a version of Theorem \ref{nonfreestabthm} for rings of arbitrary dimension.

\begin{thm}\label{higherdimnonfree}
Let $R$ be a local Noetherian ring of dimension $d$. If $R$ is not Cohen-Macaulay, then for any system of parameters $a_1,\ldots,a_d$ of $R$, there exist integers $n_1,\ldots,n_d,N_1,\ldots,N_d$ with $n_i\leq N_i$ for $i=1,\ldots,d$ such that the $R/(a_1^{n_1},\ldots,a_d^{n_d})$-module $$\Hom_R(R/(a_1^{n_1},\ldots,a_d^{n_d}),R/(a_1^{N_1},\ldots,a_d^{N_d}))$$ is decomposable and not free.
\end{thm}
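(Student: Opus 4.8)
The plan is to induct on $d$, mirroring the proof of Theorem \ref{higherdimdec} while keeping track of freeness. For the base case $d=1$, observe that a one-dimensional local ring that is not Cohen-Macaulay has depth zero, so we may fix an integer $n$ with $\mfm^n\cap\Gamma_\mfm(R)=(0)$ and set $n_1:=n$, $N_1:=2n$. Then $a_1^{n_1}\in\mfm^n$ is a parameter of $R$ and $a_1^{N_1}=(a_1^{n_1})^2\in((a_1^{n_1})^2)$ is a parameter of $R$, so Theorem \ref{nonfreestabthm} (with $a:=a_1^{n_1}$ and $b:=a_1^{N_1}$) shows that the $R/(a_1^{n_1})$-module $\Hom_R(R/(a_1^{n_1}),R/(a_1^{N_1}))$ is decomposable and has a non-free summand. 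Since $R/(a_1^{n_1})$ is local, a direct summand of a finitely generated free $R/(a_1^{n_1})$-module is again free, so this module is not free, which settles the base case.

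For the inductive step, assume $d\geq 2$. Applying Lemma \ref{nonCMstability} to the module $M=R$, we obtain an index $i$ and a positive integer $s$ with $R/(a_i^s)$ not Cohen-Macaulay; after relabeling we may assume $i=1$, and we set $n_1:=N_1:=s$. Then $\overline R:=R/(a_1^{s})$ is a local Noetherian ring of dimension $d-1$ which is not Cohen-Macaulay, and the images $\overline{a_2},\ldots,\overline{a_d}$ in $\overline R$ form a system of parameters of $\overline R$. By the induction hypothesis there are integers $n_2,\ldots,n_d,N_2,\ldots,N_d$ with $n_j\leq N_j$ such that $$V:=\Hom_{\overline R}\bigl(\overline R/(\overline{a_2}^{n_2},\ldots,\overline{a_d}^{n_d}),\ \overline R/(\overline{a_2}^{N_2},\ldots,\overline{a_d}^{N_d})\bigr)$$ is decomposable and not free as an $\overline R/(\overline{a_2}^{n_2},\ldots,\overline{a_d}^{n_d})$-module.

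It remains to identify $V$ with the desired $\Hom$ module over $R$. Both modules appearing in $V$ are annihilated by $a_1^{s}$, so every $R$-linear map between them is automatically $\overline R$-linear; combined with the ring equality $\overline R/(\overline{a_2}^{n_2},\ldots,\overline{a_d}^{n_d})=R/(a_1^{n_1},a_2^{n_2},\ldots,a_d^{n_d})$ and the choice $N_1=n_1=s$, this gives an identification $$V=\Hom_R\bigl(R/(a_1^{n_1},a_2^{n_2},\ldots,a_d^{n_d}),\ R/(a_1^{N_1},a_2^{N_2},\ldots,a_d^{N_d})\bigr)$$ of modules over $R/(a_1^{n_1},\ldots,a_d^{n_d})$. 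Hence this module is decomposable and not free over $R/(a_1^{n_1},\ldots,a_d^{n_d})$, and since $n_i\leq N_i$ for all $i$ the induction is complete.

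I expect no serious obstacle: the substance is already contained in Theorem \ref{nonfreestabthm}, Lemma \ref{nonCMstability}, and the inductive bookkeeping of Theorem \ref{higherdimdec}. The one point needing care --- and the reason this argument is slightly cleaner than Theorem \ref{higherdimdec} --- is that we must never enlarge the source ideal during the induction: Lemma \ref{mylem} is unavailable here because it changes the base ring and transfers only decomposability, not freeness. This is harmless precisely because the statement already carries powers $a_i^{n_i}$ in the source, so taking $N_1=n_1$ keeps the source in the required shape while the induction hypothesis supplies all the remaining exponents.
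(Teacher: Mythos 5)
Your proof is correct and follows essentially the same route as the paper: the base case via Theorem \ref{nonfreestabthm} with $a=a_1^{n}$, $b=a_1^{2n}$, and the inductive step via Lemma \ref{nonCMstability}, passing to $R/(a_1^{n_1})$ and setting $N_1=n_1$. Your added justifications --- that ``has a non-free summand'' implies ``not free'' over the local quotient ring, and the explicit identification of the $\Hom$ modules over $R$ and $\overline R$ --- are details the paper leaves implicit, but the argument is the same.
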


\begin{proof}
We proceed by induction on $d$. If $d=1$, then choosing $n_1$ to be the $n$ from Theorem \ref{nonfreestabthm} and $N_1=2n_1$ works.

Now suppose that $d\geq 2$. By Lemma \ref{nonCMstability}, we can find integers $i$ and $n_i$ such that $R/(a_i^{n_i})$ is not Cohen-Macaulay. We may harmlessly assume $i=1$. Set $S:=R/(a_1^{n_1})$ let $(\,\bar{\hspace{0.05in}}\,)$ denote the image in $S$. Then $\overline{a_2},\ldots,\overline{a_d}$ is a system of parameters of $S$ and, by induction, there exist integers $n_2,\ldots,n_d,N_2,\ldots,N_d$ such that the $S/(\overline{a_2}^{n_2},\ldots,\overline{a_d}^{n_d})$-module $$U:=\Hom_S(S/(\overline{a_2}^{n_2},\ldots,\overline{a_d}^{n_d}),S/(\overline{a_2}^{N_2},\ldots,\overline{a_d}^{N_d}))$$ decomposes and has a non-free summand. Note that $$S/(\overline{a_2}^{n_2},\ldots,\overline{a_d}^{n_d})\cong R/({a_1}^{n_1},\ldots,{a_d}^{n_d}).$$ Setting $N_1=n_1$ we then have $$U\cong\Hom_R(R/(a_1^{n_1},\ldots,a_d^{n_d}),R/(a_1^{N_1},\ldots,a_d^{N_d}))$$ and this gives the desired decomposition and non-free summand.
\end{proof}

\section{Examples}

In this section, we focus on examples. In particular, we investigate the structure of the $R/\mfa$-module $\Hom_R(R/\mfa,R/\mfb)$ for concrete examples of $R$, $\mfa$, and $\mfb$. 

Let us take $M=R$ in Theorem \ref{higherdimdec}. If we take $n_i=1$ for each $i$ then \begin{equation}\label{niisone}\Hom_R(R/(a_1,\ldots,a_d),R/(a_1^{n_1},\ldots,a_d^{n_d})\cong R/(a_1,\ldots,a_d)\end{equation} is a free $R/(a_1,\ldots,a_d)$-module of rank one. Our first example shows that Equation \ref{niisone} sometimes holds even when $R$ is not Cohen-Macaulay and at least one of the $n_i$'s is greater than one.

\begin{ex}
Consider the parameter $y$ of $R=k[[x,y]]/(x^2,xy^2)$. Then we have $$\Hom_R(R/(y),R/(y^2))\cong\frac{(y^2):_Ry}{(y^2)}=\frac{(y)}{(y^2)}\cong R/(y).$$
\end{ex}

The next example shows that the module $\Hom_R(R/\mfa,R/\mfb)$ can be neither cyclic nor decomposable and also that the bound in Theorem \ref{dim1stability} is not always optimal.

\begin{ex}
Consider the parameter $y^2$ of $R=k[[x,y]]/(x^2,xy^m)$. Then $$U_t:=\Hom_{R}(R/(y^2),R/(y^{2t}))$$ is $$\begin{cases}
\text{cyclic}, & \text{if } t<\frac{m+1}{2},\\
\text{indecomposable, but not cyclic}, & \text{if } t=\frac{m+1}{2}, \text{ and}\\
\text{decomposable}, & \text{if } t>\frac{m+1}{2}.
\end{cases}$$
However, Theorem \ref{dim1stability} only predicts that $U_t$ decomposes for $t>m+1$ since $\mfm^n\cap\Gamma_\mfm(R)\neq0$ for $n<m$.
\end{ex}

The next example shows that even when all of the parameters are zero divisors, $M$ may have positive depth, and $M/aM$ may be Cohen-Macaulay.

\begin{ex}
Consider the ring $R=k[[x,y,z]]/(x^2,xyz)$ of dimension two and depth one along with the system of parameters $y,z$. Both $y$ and $z$ are zero-divisors in $R$ and both $R/(y)$ and $R/(z)$ are Cohen-Macaulay rings of dimension one.
\end{ex}

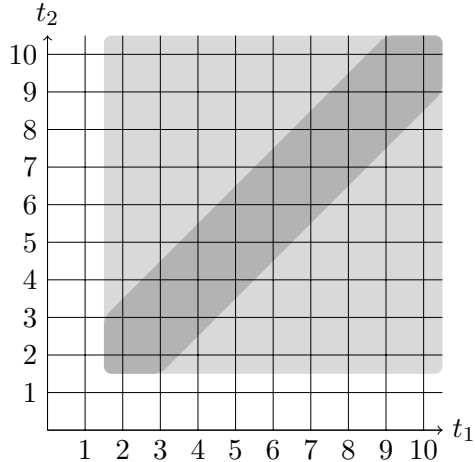
\begin{figure}[htbp]
\label{2Dpicture}
\caption{In this figure, a lattice point $(t_1,t_2)$ corresponds to the module $\Hom_R(R/(y,z),R/(y^{t_1},z^{t_2}))$ from Example \ref{cantextendex}. The modules corresponding to lattice points in the light grey regions are known to decompose due to Theorem \ref{dim1stability}. The modules corresponding to lattice points in the middle dark grey region are known to decompose by direct computation. The modules corresponding to lattice points where $t_1=1$ or $t_2=1$ are indecomposable since $R/(y)$ and $R/(z)$ are both Cohen-Macaulay rings.}
\begin{tikzpicture}[scale=0.5]
    \fill[rounded corners=1mm, gray!30] (1.5,1.5) -- (10.5,1.5) -- (10.5,10.5) -- (1.5,10.5) -- cycle;
    \fill[rounded corners=1mm, gray!60] (1.5,1.5) -- (1.5,3) -- (9,10.5) -- (10.5,10.5) -- (10.5,9) -- (3,1.5) -- cycle;
    \draw[very thin,color=black] (0,0) grid (10.5,10.5);
    \draw[->] (0,0) -- (10.5,0) node[right] {$t_1$};
    \draw[->] (0,0) -- (0,10.5) node[above] {$t_2$};
    \draw[] (1,0) node[below] {$1$};
    \draw[] (2,0) node[below] {$2$};
    \draw[] (3,0) node[below] {$3$};
    \draw[] (4,0) node[below] {$4$};
    \draw[] (5,0) node[below] {$5$};
    \draw[] (6,0) node[below] {$6$};
    \draw[] (7,0) node[below] {$7$};
    \draw[] (8,0) node[below] {$8$};
    \draw[] (9,0) node[below] {$9$};
    \draw[] (10,0) node[below] {$10$};
    \draw[] (0,1) node[left] {$1$};
    \draw[] (0,2) node[left] {$2$};
    \draw[] (0,3) node[left] {$3$};
    \draw[] (0,4) node[left] {$4$};
    \draw[] (0,5) node[left] {$5$};
    \draw[] (0,6) node[left] {$6$};
    \draw[] (0,7) node[left] {$7$};
    \draw[] (0,8) node[left] {$8$};
    \draw[] (0,9) node[left] {$9$};
    \draw[] (0,10) node[left] {$10$};
\end{tikzpicture}
\end{figure}

Theorems \ref{dim1stability} and \ref{nonfreestabthm}, which give bounds on the powers needed to make $\Hom_R(R/\mfa,M/\mfb M)$ decompose and be non-free, apply only in dimension one. However, examples seem to indicate that the $R/\mfa$-module $$\Hom_R(R/\mfa,R/(a_1^{n_1},\ldots,a_d^{n_d}))$$ is neither free nor indecomposable if the $n_i$ are large enough. One such example is explained below.

\begin{ex}\label{cantextendex}
Again, consider the ring $R=k[[x,y,z]]/(x^2,xyz)$ of dimension two and depth one along with the system of parameters $y,z$. If $n_1\geq 2$, then $S_{n_1}:=R/(y^{n_1})$ is not Cohen-Macaulay (since the non-zero element $xy^{n_1-1}$ is in the socle). Letting $\mfm$ be the maximal ideal of $S_{n_1}$ we have $\mfm^n\cap\Gamma_\mfm(S_{n_1})=0$ if and only if $n\geq n_1+2$. By symmetry, the same holds for the ring $T_{n_2}:=R/(z^{n_2})$. Thus Theorem \ref{dim1stability} gives that $U_{n_1,n_2}:=\Hom_R(R/(y,z),R/(y^{n_1},z^{n_2})$ decomposes for all $n_1,n_2\geq 2$ with $|n_1-n_2|>2$. However, direct computation shows that $U_{n_1,n_2}$ actually decomposes for all $n_1,n_2\geq 2$. See Figure 1 for a visual representation of this.
\end{ex}

\section{Acknowledgements}
This work was done as part of my Ph.D. thesis work at the University of Nebraska - Lincoln, under the guidance of my advisors, Srikanth Iyengar and Roger Wiegand. I would like to thank them for their assistance in the development of this work and their suggestions in the writing of this paper.

\begin{bibdiv}
\begin{biblist}

\bib{BandN}{article}{
AUTHOR = {Bahmanpour, Kamal},
AUTHOR = {Naghipour, Reza},
TITLE = {A new characterization of {C}ohen-{M}acaulay rings},
JOURNAL = {J. Algebra Appl.},
FJOURNAL = {Journal of Algebra and its Applications},
VOLUME = {13},
YEAR = {2014},
NUMBER = {8},
PAGES = {1450064, 7},
ISSN = {0219-4988},
MRCLASS = {13H10},
MRNUMBER = {3225131},
DOI = {10.1142/S0219498814500649},
URL = {http://dx.doi.org/10.1142/S0219498814500649},
}

\bib{BandH}{book}{
AUTHOR = {Bruns, Winfried},
AUTHOR = {Herzog, J{\"u}rgen},
TITLE = {Cohen-{M}acaulay rings},
SERIES = {Cambridge Studies in Advanced Mathematics},
VOLUME = {39},
PUBLISHER = {Cambridge University Press, Cambridge},
YEAR = {1993},
PAGES = {xii+403},
ISBN = {0-521-41068-1},
MRCLASS = {13H10 (13-02)},
MRNUMBER = {1251956 (95h:13020)},
MRREVIEWER = {Matthew Miller},
}

%\bib{LandW}{book}{
%AUTHOR = {Leuschke, Graham J. and Wiegand, Roger},
%TITLE = {Cohen-{M}acaulay representations},
%SERIES = {Mathematical Surveys and Monographs},
%VOLUME = {181},
%PUBLISHER = {American Mathematical Society, Providence, RI},
%YEAR = {2012},
%PAGES = {xviii+367},
%ISBN = {978-0-8218-7581-0},
%MRCLASS = {13C14 (13H10 16G10)},
%MRNUMBER = {2919145},
%MRREVIEWER = {Geoffrey D. Dietz},
%DOI = {10.1090/surv/181},
%URL = {http://dx.doi.org/10.1090/surv/181},
%}

\bib{Northcott}{book}{
AUTHOR = {Northcott, D. G.},
TITLE = {Lessons on rings, modules and multiplicities},
PUBLISHER = {Cambridge University Press, London},
YEAR = {1968},
PAGES = {xiv+444},
MRCLASS = {13.00 (16.00)},
MRNUMBER = {0231816 (38 \#144)},
MRREVIEWER = {B. L. Osofsky},
}

\bib{Rees}{article}{
AUTHOR = {Rees, D.},
TITLE = {A theorem of homological algebra},
JOURNAL = {Proc. Cambridge Philos. Soc.},
VOLUME = {52},
YEAR = {1956},
PAGES = {605--610},
MRCLASS = {18.0X},
MRNUMBER = {0080653 (18,277g)},
MRREVIEWER = {H. Cartan},
}

\end{biblist}
\end{bibdiv}

\end{document}